\newtheorem{theorem}{Theorem}[section]
\newtheorem{lemma}[theorem]{Lemma}
\newtheorem{proposition}[theorem]{Proposition}
\theoremstyle{definition}
\newtheorem{definition}[theorem]{Definition}
\newtheorem{remark}[theorem]{Remark}
\numberwithin{equation}{section}
\DeclareMathOperator{\diam}{diam} 
\newcommand{\be}{\begin{equation}}
\newcommand{\ee}{\end{equation}}
\DeclareMathOperator{\supp}{supp}
\def\Xint#1{\mathchoice 
 {\XXint\displaystyle\textstyle{#1}}%
{\XXint\textstyle\scriptstyle{#1}}%
{\XXint\scriptstyle\scriptscriptstyle{#1}}%
 {\XXint\scriptscriptstyle\scriptscriptstyle{#1}}%
 \!\int}
\def\XXint#1#2#3{{\setbox0=\hbox{$#1{#2#3}{\int}$}
 \vcenter{\hbox{$#2#3$}}\kern-.5\wd0}}
 \def\dashint{\Xint-}
\title{Lower bound of measure and embeddings of Sobolev, Besov and Triebel-Lizorkin spaces}
\author{Nijjwal Karak}
\address{Discipline of Mathematics, Indian Institute of Technology Indore, Simrol, Indore-453552, India}
\email{nijjwal@gmail.com}
\thanks{This work was supported by DST-SERB (Grant no. PDF/2016/000328).}
\begin{document}

\begin{abstract}
In this article, we study the relation between Sobolev-type embeddings for Sobolev spaces or Haj\l asz-Besov spaces or Haj\l asz-Triebel-Lizorkin spaces defined either on a doubling or on a geodesic metric measure space and lower bound for measure of balls either in the whole space or in a domain inside the space.  
\end{abstract}
\maketitle
\indent Keywords: Metric measure space, Haj\l asz-Sobolev space, Haj\l asz-Besov space, Haj\l asz-Triebel-Lizorkin space and measure density.\\
\indent 2010 Mathematics Subject Classification: 46E35, 42B35 .
\section{Introduction}
This paper concerns with the necessity condition for Sobolev-type embedding for Haj\l asz-Sobolev spaces $M^{s,p}(X)$ as well as Haj\l asz-Besov spaces $N^s_{p,q}(X)$ and Haj\l asz-Triebel-Lizorkin spaces $M^s_{p,q}(X)$ defined on a metric measure space $(X,d,\mu);$ see Section 2 for the definitions of these function spaces. In particular, we ask if the Sobolev-type embedding holds then whether the measure of the balls has some lower bound or whether the domain satisfies the so-called measure density condition. Note that a domain $\Omega$ in a $Q$-doubling metric measure space $(X,d,\mu),$ $Q>1,$ satisfies the measure density condition if there exists a constant $c>0$ such that for all $x\in\Omega$ and for all $0<r\leq 1,$
\begin{equation}\label{measuredensity}
\mu(B(x,r)\cap\Omega)\geq cr^Q.
\end{equation} 
In \cite{HKT08b}, it has been proved that if the Sobolev embedding theorem holds in $\Omega\subset\mathbb{R}^n,$ in any of all the possible cases, then $\Omega$ satisfies the measure density condition. One can also look at \cite{HKT08} and \cite{Zho15} for more results in this direction. Similar results for Haj\l asz-Besov spaces and Haj\l asz-Trieble-Lizorkin spaces have been studied in \cite{HIT16} and \cite{K}. Recently, G\'orka in \cite{Gor17} has proved that in a metric measure space $(X,d,\mu),$ where $\mu$ is a doubling 
measure, if the embedding $M^{1,p}(X)\hookrightarrow L^{p^*}(X)$ holds for any $p^*>p,$ then there exists $b>0$ such that for any $x\in X$ and $0<r\leq 1,$ the following inequality holds
\begin{equation*}
\mu(B(x,r))\geq br^Q,
\end{equation*}
where $\frac{1}{p}-\frac{1}{p^*}=\frac{1}{Q}.$ Here and in what follows the symbol $\hookrightarrow$ denotes a continuous embedding. In this paper, we prove that in a geodesic space $(X,d,\mu),$ if the embedding $M^{1,p}(\Omega)\hookrightarrow L^{p^*}(\Omega)$ holds for a domain $\Omega\subset X,$ then $\Omega$ satisfies the measure density condition \eqref{measuredensity}. We have also considered the cases when $p>Q$ and $p=Q$ for $M^{1,p}(X)$ in a $Q$-doubling space. We have also established the same result as in \cite{Gor17} for Haj\l asz-Besov and Haj\l asz-Triebel-Lizorkin spaces.
\section{preliminaries}
\indent Let $(X,d,\mu)$ be a metric measure space equipped with a metric $d$ and a Borel regular measure $\mu.$ We assume throughout the paper that the measure of every open nonempty set is positive and that of every bounded set is finite.
\indent The measure $\mu$ is called \textit{doubling} if there exist a constant $C_d\geq 1$ such that
\begin{equation*}
\mu(B(x,2r))\leq C_d\,\mu(B(x,r))
\end{equation*}
for each $x\in X$ and $r>0.$ We call a triple $(X,d,\mu)$ a \textit{doubling metric measure space} if $\mu$ is a doubling measure on $X.$ If the measure $\mu$ is doubling, then a simple iteration argument (see \cite{HKST15}) shows that there
is an exponent $Q>0$ and a constant $C\geq 1$ so that
\begin{equation}\label{Qdoubling}
\left(\frac{s}{r}\right)^Q\leq C\frac{\mu(B(x,s))}{\mu(B(a,r))}
\end{equation}
holds whenever $a\in X$, $x\in B(a,r)$ and $0<s\leq r.$ We say that $(X,d,\mu)$ is $Q$-\textit{doubling} if $(X,d,\mu)$ is a doubling metric measure space and \eqref{Qdoubling} holds with the given $Q.$\\
\indent A metric space $X$ is said to be \textit{geodesic} if every pair of points in the space can be joined by a curve whose length is equal to the distance between the points.\\

Besov and Triebel-Lizorkin spaces are certain generalizations of fractional Sobolev spaces. There are several ways to define these spaces in the Euclidean setting and also in the metric setting. For various definitions of in the metric setting, see \cite{GKS10}, \cite{GKZ13}, \cite{KYZ11} and the references therein. In this paper, we use the approach based on pointwise inequalities, introduced in \cite{KYZ11}.
\begin{definition}
Let $S\subset X$ be a measurable set and let $0<s<\infty.$ A sequence of nonnegative measurable functions $(g_k)_{k\in\mathbb{Z}}$ is a fractional $s$-gradient of a measurable function $u:S\rightarrow [-\infty,\infty]$ in $S,$ if there exists a set $E\subset S$ with $\mu(E)=0$ such that
\begin{equation}\label{Hajlasz}
\vert u(x)-u(y)\vert\leq d(x,y)^s\left(g_k(x)+g_k(y)\right)
\end{equation} 
for all $k\in\mathbb{Z}$ and for all $x,y\in S\setminus E$ satisfying $2^{k-1}\leq d(x,y)<2^{k}.$ The collection of all fractional $s$-gradient of $u$ is denoted by $\mathbb{D}^s(u).$
\end{definition}
Let $S\subset X$ be a measurable set. For $0<p,q\leq \infty$ and a sequence $\vec{f}=(f_k)_{k\in\mathbb{Z}}$ of measurable functions, we define
\begin{equation*}
\Vert (f_k)_{k\in\mathbb{Z}}\Vert_{L^p(S,l^q)}=\big\Vert \Vert (f_k)_{k\in\mathbb{Z}}\Vert_{l^q} \big\Vert_{L^p(S)}
\end{equation*}
and
\begin{equation*}
\Vert (f_k)_{k\in\mathbb{Z}}\Vert_{l^q(L^p(S))}=\big\Vert (\Vert (f_k)\Vert_{L^p(S)})_{k\in\mathbb{Z}} \big\Vert_{l^q},
\end{equation*}
where
\begin{equation*}
\Vert (f_k)_{k\in\mathbb{Z}}\Vert_{l^q}=
\begin{cases}
(\sum_{k\in\mathbb{Z}}\vert f_k\vert^q)^{1/q},& ~\text{when}~0<q<\infty,\\
\sup_{k\in\mathbb{Z}}\vert f_k\vert,& ~\text{when}~q=\infty.
\end{cases}
\end{equation*}
\begin{definition}
Let $S\subset X$ be a measurable set. Let $0<s<\infty$ and $0<p,q\leq\infty.$ The \textit{homogeneous Haj\l asz-Triebel-Lizorkin space} $\dot{M}^s_{p,q}(S)$ consists of measurable functions $u:S\rightarrow [-\infty,\infty],$ for which the (semi)norm
\begin{equation*}
\Vert u\Vert_{\dot{M}^s_{p,q}(S)}=\inf_{\vec{g}\in\mathbb{D}^s(u)}\Vert\vec{g}\Vert_{L^p(S,l^q)}
\end{equation*}
is finite. The (non-homogeneous) \textit{Haj\l asz-Triebel-Lizorkin space} $M^s_{p,q}(S)$ is $\dot{M}^s_{p,q}(S)\cap L^p(S)$ equipped with the norm
\begin{equation*}
\Vert u\Vert_{M^s_{p,q}(S)}=\Vert u\Vert_{L^p(S)}+\Vert u\Vert_{\dot{M}^s_{p,q}(S)}.
\end{equation*} 
Similarly, the \textit{homogeneous Haj\l asz-Besov space} $\dot{N}^s_{p,q}(S)$ consists of measurable functions $u:S\rightarrow [-\infty,\infty],$ for which the (semi)norm
\begin{equation*}
\Vert u\Vert_{\dot{N}^s_{p,q}(S)}=\inf_{\vec{g}\in\mathbb{D}^s(u)}\Vert\vec{g}\Vert_{l^q(L^p(S))}
\end{equation*}
is finite and the (non-homogeneous) \textit{Haj\l asz-Besov space} $N^s_{p,q}(S)$ is $\dot{N}^s_{p,q}(S)\cap L^p(S)$ equipped with the norm
\begin{equation*}
\Vert u\Vert_{N^s_{p,q}(S)}=\Vert u\Vert_{L^p(S)}+\Vert u\Vert_{\dot{N}^s_{p,q}(S)}.
\end{equation*} 
\end{definition}
\noindent The space $M^s_{p,q}(\mathbb{R}^n)$ given by the metric definition coincides with the Triebel-Lizorkin space $F^s_{p,q}(\mathbb{R}^n),$ defined via the Fourier analytic approach, when $0<s<1,$ $n/(n+s)<p<\infty$ and $0<q\leq \infty,$ see \cite{KYZ11}. Similarly, $N^s_{p,q}(\mathbb{R}^n)$ coincides with Besov space $B^s_{p,q}(\mathbb{R}^n)$ for $0<s<1,$ $n/(n+s)<p<\infty$ and $0<q\leq \infty,$ see \cite{KYZ11}. For the definitions of $F^s_{p,q}(\mathbb{R}^n)$ and $B^s_{p,q}(\mathbb{R}^n),$ we refer to \cite{Tri83} and \cite{Tri92}.\\
Let us also recall the definition of Haj\l asz-Sobolev space $M^{s,p}(S),$ which is due to Haj\l asz for $s=1,$ \cite{Haj96} and to Yang for fractional scales, \cite{Yan03}.
\begin{definition}
Let $S\subset X$ be a measurable set. Let $0<s<\infty$ and $0<p\leq\infty.$ A nonnegative measurable function $g$ is an $s$-gradient of a measurable function $u$ in $S$ if there exists a set $E\subset S$ with $\mu(E)=0$ such that for all $x,y\in S\setminus E,$
\begin{equation*}
\vert u(x)-u(y)\vert\leq d(x,y)^s(g(x)+g(y)).
\end{equation*} 
The collection of all $s$-gradients of $u$ is denoted by $\mathcal{D}^s(u).$ The \textit{homogeneous Haj\l asz-Sobolev space} $\dot{M}^{s,p}(S)$ consists of measurable functions $u$ for which 
\begin{equation*}
\Vert u\Vert_{\dot{M}^{s,p}(S)}=\inf_{g\in\mathcal{D}^s(u)}\Vert g\Vert_{L^p(S)}
\end{equation*}
is finite. The \textit{Haj\l asz-Sobolev space} $M^{s,p}(S)$ is $\dot{M}^{s,p}(S)\cap L^p(S)$ equipped with the norm
\begin{equation*}
\Vert u\Vert_{M^{s,p}(S)}=\Vert u\Vert_{L^p(S)}+\Vert u\Vert_{\dot{M}^{s,p}(S)}.
\end{equation*}
\end{definition}
\noindent Note that if $0<s<\infty$ and $0<p\leq\infty,$ then $\dot{M}^s_{p,\infty}(X)=\dot{M}^{s,p}(X),$ \cite[Proposition 2.1]{KYZ11}.\\

\section*{Main Results}
Our first result gives the measure density condition from Sobolev embedding in a geodesic metric measure space. In particular, when $\Omega=X,$ it gives the result of \cite{Gor17} without the doubling but the geodesity assumption. Note that this result was proved in \cite{K} for $M^s_{p,q}(X)$ and $N^s_{p,q}(X),$ where $0<s<1, 0<p<\infty, 0<q\leq\infty$ and hence in particular, for $M^{s,p}(X),$ where $0<s<1, 0<p<\infty.$
\begin{theorem}\label{fordomain}
Suppose $(X,d,\mu)$ is a geodesic metric measure space. If for any domain $\Omega\subset X,$ $M^{1,p}(\Omega)\hookrightarrow L^{p^*}(\Omega),$ where $p^*>p\geq 1,$ then there exists a constant $b$ depending on $p,p^*$ and the embedding constant such that
\begin{equation*}
\mu(B(x,R)\cap\Omega)\geq bR^Q
\end{equation*}
holds for all $R\in (0,1],$ where $\frac{1}{p}-\frac{1}{p^*}=\frac{1}{Q}.$
\end{theorem}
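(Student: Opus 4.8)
The plan is to test the embedding against a family of radial bump functions and to convert the resulting inequality into a scale-invariant recursion that I can iterate. Fix $x\in\Omega$ and $R\in(0,1]$, and set $u(y)=\max\{0,1-d(x,y)/R\}$, regarded as a function on $\Omega$. First I would check that $g=R^{-1}\chi_{B(x,R)}$ is a $1$-gradient of $u$, which holds in an \emph{arbitrary} metric space: for $a\in B(x,R)$ and $c\notin B(x,R)$ one has $u(a)=(R-d(x,a))/R\le d(a,c)/R$ by the triangle inequality, while for $a,c\in B(x,R)$ the estimate $|u(a)-u(c)|\le d(a,c)/R$ is immediate. Thus $u\in M^{1,p}(\Omega)$. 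Writing $m(r):=\mu(B(x,r)\cap\Omega)$, the bound $u\ge 1/2$ on $B(x,R/2)$ gives $\|u\|_{L^{p^*}(\Omega)}\ge\tfrac12 m(R/2)^{1/p^*}$, while $\|u\|_{L^p(\Omega)}\le m(R)^{1/p}$ and $\|u\|_{\dot{M}^{1,p}(\Omega)}\le R^{-1}m(R)^{1/p}$. Using $R\le1$ to absorb the lower-order term, the embedding yields $m(R/2)^{1/p^*}\le (C'/R)\,m(R)^{1/p}$, where $C'$ is a multiple of the embedding constant.

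Next I would recast this in scale-invariant form. Since $1/p^*=1/p-1/Q$, raising to the power $p^*$ and solving for $m(R)$ gives $m(R)\ge (R/C')^{p}\,m(R/2)^{\beta}$ with $\beta=p/p^*=1-p/Q\in(0,1)$ (note $Q>p$ always, as $p^*<\infty$). Applying this at the dyadic scales $r_j=2^{-j}R$ and setting $\phi_j:=m(r_j)/r_j^{Q}$, the identity $p+Q\beta=Q$ makes every power of $r_j$ cancel, leaving the clean recursion $\phi_j\ge A\,\phi_{j+1}^{\beta}$ with $A=C'^{-p}2^{-(Q-p)}$. Iterating this $n$ times gives $\phi_0\ge A^{(1-\beta^{n})/(1-\beta)}\,\phi_n^{\beta^{n}}$, and as $n\to\infty$ the prefactor tends to $A^{1/(1-\beta)}=A^{Q/p}$, which is precisely the target constant $b$, and it is independent of $x$. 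Hence the theorem follows as soon as $\phi_n^{\beta^{n}}\to1$.

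The main obstacle is exactly this last limit, namely ruling out that $m(r_n)$ collapses too quickly as $r_n\to0$. In the doubling setting of \cite{Gor17} this is immediate from \eqref{Qdoubling}, which gives $\phi_n\ge C^{-1}\phi_0>0$ uniformly and forces $\phi_n^{\beta^{n}}\to1$; here that tool is unavailable and must be replaced. I would instead aim to extract a crude, possibly $x$-dependent, lower bound of the form $m(r)\ge c(x)\,r^{N}$ for all small $r$, which already suffices: then $\beta^{n}\log m(r_n)\to0$ (the linear-in-$n$ growth of $|\log m(r_n)|$ is killed by $\beta^{n}$), so $\phi_n^{\beta^{n}}\to1$ while the limiting constant $b=A^{Q/p}$ is left untouched. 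Note that such a crude bound cannot follow from the embedding alone, since the recursion only ever bounds large scales from below by small ones and therefore permits arbitrarily fast decay of $m(r_n)$; geodesity is genuinely needed.

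This is where I expect the real work to lie. Because $X$ is geodesic, for small $r$ the ball $B(x,r)$ lies in $\Omega$, there exist points at every distance from $x$ up to the diameter, and the balls $B(x,r)$ are connected; I would exploit this through a chaining argument along a geodesic issuing from $x$, covering it by overlapping balls of comparable radius, to compare $\mu(B(x,r))$ across nearby scales and thereby rule out super-polynomial decay. Making this crude lower bound precise in the absence of any doubling hypothesis is the crux of the proof. Once it is in hand, the iteration above delivers $\mu(B(x,R)\cap\Omega)\ge bR^{Q}$ uniformly in $x\in\Omega$ and $R\in(0,1]$, with $b=A^{Q/p}$ depending only on $p$, $p^*$ and the embedding constant.
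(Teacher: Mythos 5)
Your iteration scheme is essentially G\'orka's doubling argument run at dyadic radii, and it breaks down exactly where you say it does: the proof is not complete without the limit $\phi_n^{\beta^n}\to 1$, and the crude lower bound $m(r)\ge c(x)\,r^N$ that you propose to supply it is left entirely unproved. This is a genuine gap, not a technicality. The recursion $m(R)\ge (R/C')^p\,m(R/2)^\beta$ only propagates information from small scales up to large ones, so it cannot by itself exclude, say, $m(r_n)\approx\exp(-\beta^{-n})$, which would destroy the limit; and geodesity alone does not exclude super-polynomial decay of $\mu(B(x,r))$ either (take $\mathbb{R}$ with the weight $e^{-1/|t|}\,dt$ near the origin). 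The chaining argument you sketch --- covering a geodesic by overlapping balls so as to ``compare $\mu(B(x,r))$ across nearby scales'' --- is precisely the kind of step that a doubling hypothesis would license and that is unavailable here. So the crux of the theorem is missing from your write-up.

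The paper sidesteps this issue entirely by iterating in measure rather than in radius. Geodesity is used only to show that spheres have $\mu$-measure zero, hence that $r\mapsto\mu(B(x,r)\cap\Omega)$ is continuous, so one can choose radii $R=r_0>r_1>r_2>\cdots$ with $\mu(B(x,r_j)\cap\Omega)=2^{-j}\mu(B(x,R)\cap\Omega)$. Testing the embedding against the plateau function which equals $1$ on $B(x,r_{j+1})\cap\Omega$, vanishes off $B(x,r_j)$, and has the $1$-gradient $(r_j-r_{j+1})^{-1}\chi_{B(x,r_j)}$ then gives the increment bound $r_j-r_{j+1}\le C\,2^{-j/Q}\mu(B(x,R)\cap\Omega)^{1/Q}$ directly, and since $r_j\downarrow 0$ the telescoping sum yields $R\lesssim\mu(B(x,R)\cap\Omega)^{1/Q}$ with a constant depending only on $p$, $p^*$ and the embedding constant; no limit of the type $\phi_n^{\beta^n}$ appears and no a priori control of $m$ at small scales is needed. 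Replacing your dyadic radii by these measure-adapted radii is the missing idea you would need to complete the argument.
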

\begin{proof}
For $x\in X$ and $R>0,$ let $y\in B(x,R).$ Take $\tilde{R}<R.$ Since $y$ can be connected to $x$ by a geodesic, it follows that the set $B(y,\tilde{R})\cap B(x,R)$ contains a ball of radius $\tilde{R}/2$ and hence
\begin{equation*}
 \mu(B(y,\tilde{R})\cap B(x,R))\geq C\mu(B(y,\tilde{R})),
 \end{equation*}
 thanks to \eqref{Qdoubling}. This and Lebesgue differentiation theorem implies that $\mu(\partial B(x,R))=0.$ Since this is true for all $R>0,$ so there always exists $r<R$ such that
\begin{equation}\label{applicationofgeodesic}
\mu(B(x,r)\cap\Omega)=\frac{1}{2}\mu(B(x,R)\cap\Omega).
\end{equation}
Let $u:\Omega\rightarrow [0,1]$ be defined as
\begin{equation}\label{mainfn}
u(y)=
\begin{cases}
  1& \text{if $y\in B(x,r)\cap\Omega$},\\
  \frac{R-d(x,y)}{R-r} & \text{if $y\in B(x,R)\setminus B(x,r)\cap\Omega$},\\
  0& \text{if $y\in \Omega\setminus B(x,R)$}.
 \end{cases}
\end{equation}
Note that $\Vert u\Vert_{L^{p^*}(\Omega)}\geq \mu(B(x,r)\cap\Omega)^{\frac{1}{p^*}}.$ Also note that $g(y)=\frac{1}{R-r}\chi_{B(x,R)}$ is a generalized gradient of $u$ and hence
\begin{eqnarray*}
\Vert u\Vert_{M^{1,p}(\Omega)} &\leq & \left(\int_{\Omega}\vert u\vert^p\,d\mu \right)^{\frac{1}{p}}+\left(\int_{\Omega}\vert g\vert^p\, d\mu \right)^{\frac{1}{p}}\\
&\leq & \mu(B(x,R)\cap\Omega)^{\frac{1}{p}}+\frac{1}{R-r}\mu(B(x,R)\cap\Omega)^{\frac{1}{p}}\\
&\leq & \frac{2}{R-r}\mu(B(x,R)\cap\Omega)^{\frac{1}{p}},
\end{eqnarray*}
as $r<R<1.$ Therefore, from hypothesis, we obtain
\begin{equation*}
R-r\leq C\mu(B(x,R)\cap\Omega)^{\frac{1}{Q}}
\end{equation*}
Now let us define a sequence $r_0>r_1>r_2>\cdots>0$ by induction as follows: $r_0=R,$ $r_1=r$ and $r_2$ is the radius comes from \eqref{applicationofgeodesic} for $r_1$ and so on.
 Clearly,
\begin{equation}\label{decreasing}
\mu(B(x,r_j)\cap\Omega)=2^{-j}\mu(B(x,R)\cap\Omega).
\end{equation}
Therefore $r_j\rightarrow 0$ as $j\rightarrow\infty,$ and hence
\begin{eqnarray*}
R &=&\sum_{j=0}^{\infty}(r_j-r_{j+1})\\
&\lesssim & \sum_{j=0}^{\infty}2^{-j/Q}\mu(B(x,R)\cap\Omega)^{1/Q}\\
&\leq & \mu(B(x,R)\cap\Omega)^{1/Q},
\end{eqnarray*}
as desired.
\end{proof}
In a $Q$-doubling space, $Q>1,$ Theorem \ref{fordomain} considers the case when $p<Q$ for $M^{1,p}(X).$ Now we will consider the other two cases, i.e., when $p=Q$ and $p>Q.$
\begin{theorem}\label{connected}
Suppose that $(X,d,\mu)$ is a connected $Q$-doubling space, $Q>1.$\\
(i) If there exist constants $C_1, C_2$ such that for all $u\in M^{1,Q}(X)$ and for all balls $B\in X,$ we have
\begin{equation}\label{embedding}
\dashint_{B}\exp\left(C_1\frac{\vert u-u_B\vert}{\Vert u\Vert_{M^{1,Q}}}\right)\,d\mu\leq C_2,
\end{equation}
then there exists $b=b(C_1,C_2,C_d)$ such that $\mu(B(x,r))\geq br^Q$ holds for all $x\in X$ and for all $0<r<\min\{1,(\diam X)/2\}.$\\
(ii) If there exists a constant $C_3$ such that for all $u\in M^{1,p}(X),$ $p>Q,$ and for every $y,z\in X,$ we have $\vert u(y)-u(z)\vert\leq C_3\Vert u\Vert_{M^{1,p}(X)}d(y,z)^{1-Q/p},$ then there exists $b=b(C_3,C_d)$ such that $\mu(B(x,r))\geq br^Q$ holds for all $x\in X$ and for all $0<r<\min\{1,(\diam X)/2\}.$
\end{theorem}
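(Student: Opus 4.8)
The plan is to treat both parts by the strategy already used for Theorem~\ref{fordomain}: feed a cut-off test function concentrated on $B(x,r)$ into the assumed embedding and read off a lower bound for $\mu(B(x,r))$ from the resulting inequality. In every case the test function $u$ will be supported in $B(x,r)$, will satisfy $0\le u\le 1$, and will admit the Haj\l asz gradient $g=\frac{c}{r}\chi_{B(x,r)}$ for an absolute constant $c$; this is checked exactly as for the function \eqref{mainfn} (the profile is $c/r$-Lipschitz in $d(x,\cdot)$, and for $y\in B(x,r)$, $y'\notin B(x,r)$ the Haj\l asz inequality reduces to $u(y)\le \frac{c}{r}d(y,y')$, which follows from $d(y,y')\ge d(x,y')-d(x,y)\ge r-d(x,y)$). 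Consequently $\Vert u\Vert_{M^{1,p}(X)}\le \frac{C}{r}\mu(B(x,r))^{1/p}$, where $r<1$ is used to absorb the $L^p$-term; this single estimate is common to both parts.

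For part (ii) I would take the tent $u(y)=\bigl(1-d(x,y)/r\bigr)_+$, so that $u(x)=1$. Since $X$ is connected, the image of the continuous map $y\mapsto d(x,y)$ is an interval containing $0$, and because $r<(\diam X)/2$ there is a point at distance $>r$ from $x$; hence by the intermediate value property there is $z\in X$ with $d(x,z)=r$, giving $u(z)=0$. The H\"older-type hypothesis then yields $1=\vert u(x)-u(z)\vert\le C_3\Vert u\Vert_{M^{1,p}(X)}\,r^{\,1-Q/p}$, and combining this with $\Vert u\Vert_{M^{1,p}(X)}\le \frac{2}{r}\mu(B(x,r))^{1/p}$ and solving gives $\mu(B(x,r))\ge (2C_3)^{-p}r^Q$, the desired bound with $b=(2C_3)^{-p}$.

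For part (i) I would instead use a trapezoidal $u$, equal to $1$ on $B(x,r/4)$, equal to $0$ outside $B(x,r)$, and interpolating linearly in $d(x,y)$ between; then $u=1$ on $B(x,r/4)$ and $u\le 1/4$ whenever $d(x,y)\ge 13r/16$. Applying \eqref{embedding} on $B=B(x,r)$, the point is to exhibit one ball $A\subset B$ with $\mu(A)\ge c\,\mu(B)$ on which $\vert u-u_B\vert\ge 1/4$. I would split according to the average $u_B$: if $u_B\le 1/2$ take $A=B(x,r/4)$, where $u-u_B\ge 1/2$; if $u_B>1/2$ take $A$ to be a small ball inside the shell $B(x,r)\setminus B(x,13r/16)$, where $u-u_B\le 1/4-1/2$. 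In both cases $\mu(A)\ge c\,\mu(B)$ follows from \eqref{Qdoubling}, with $c$ depending only on $C_d$.

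With such an $A$ in hand the conclusion is immediate: since $\exp$ is increasing and $u\ge 0$,
\[
C_2\ \ge\ \dashint_B\exp\!\left(C_1\frac{\vert u-u_B\vert}{\Vert u\Vert_{M^{1,Q}}}\right)d\mu\ \ge\ \frac{\mu(A)}{\mu(B)}\exp\!\left(\frac{C_1}{4\Vert u\Vert_{M^{1,Q}}}\right),
\]
so $\Vert u\Vert_{M^{1,Q}}\ge C_1\bigl(4\log(C_2/c)\bigr)^{-1}=:c_1>0$; comparing with $\Vert u\Vert_{M^{1,Q}}\le \frac{C}{r}\mu(B(x,r))^{1/Q}$ yields $\mu(B(x,r))\ge (c_1/C)^Q r^Q$, with $b=(c_1/C)^Q$ depending on $C_1,C_2,C_d$. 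The main obstacle is the case $u_B>1/2$: one must know that the shell just inside $\partial B(x,r)$ carries a definite proportion of $\mu(B)$. This is precisely where connectedness enters as a substitute for reverse doubling — the intermediate value property supplies $z$ with $d(x,z)=15r/16<r$ (here $r<(\diam X)/2$ guarantees a point at distance $>r$ from $x$), and then $B(z,r/32)\subset B(x,r)\setminus B(x,13r/16)$ has measure $\gtrsim\mu(B)$ by doubling. Once this point is settled the remaining estimates are routine.
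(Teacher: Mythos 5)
Your proposal is correct and follows essentially the same strategy as the paper: in both parts one tests the embedding with a Lipschitz bump supported in $B(x,r)$ whose Haj\l asz gradient is $\lesssim r^{-1}\chi_{B(x,r)}$, and uses connectedness (via the intermediate value property for $d(x,\cdot)$, available because $r<(\diam X)/2$) together with the doubling inequality \eqref{Qdoubling} to produce either a point where $u$ vanishes (part (ii)) or an annular ball of measure comparable to $\mu(B(x,r))$ on which $\vert u-u_B\vert$ is bounded below (part (i)). The only difference is cosmetic: you keep the annulus inside $B(x,r)$ by using a trapezoid that drops to $1/4$ before the boundary, whereas the paper compares $\mu(B(x,r/2))$ with the measure of an annulus outside $B(x,r)$; both yield the same bound with the same dependence of $b$ on the constants.
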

\begin{proof}
(i) For a fixed $x\in X$ and $r>0,$ let us define
\begin{equation}\label{mainfunction}
u(y)=
\begin{cases}
  1& \text{if $y\in B(x,\frac{r}{2})$},\\
  \frac{2}{r}(r-d(x,y))& \text{if $y\in B(x,r)\setminus B(x,\frac{r}{2})$},\\
  0& \text{if $y\in X\setminus B(x,r)$}.
 \end{cases}
\end{equation}
It is easily seen that the function $g(y)=\frac{2}{r}\chi_{B(x,r)}$ is a $1$-gradient of $u$ and we have the estimate $\Vert u\Vert_{M^{1,Q}(X)}\leq \frac{3}{r}(\mu(B(x,r)))^{\frac{1}{Q}}$ for all $r\leq 1.$ Hence, by hypothesis, we obtain
\begin{equation*}
\int_{B}\exp\left(\frac{C_1r\vert u-u_B\vert}{\mu(B(x,r))^{1/Q}}\right)\,d\mu\leq C_2\mu(B(x,r))
\end{equation*}
Since $u=1$ on $B(x,\frac{r}{2})$ and $u=0$ on $X\setminus B(x,r),$ we have that $\vert u-u_{B(x,r)}\vert\geq\frac{1}{2}$ on at least one of the sets $B(x,\frac{r}{2})$ and $X\setminus B(x,r).$ Hence,
\begin{equation}\label{maininequality}
\min\{\mu(B(x,\frac{r}{2})),\mu(X\setminus B(x,r))\}\exp\left(\frac{C_1r}{\mu(B(x,r))^{1/Q}}\right)\leq C_2\mu(B(x,r)).
\end{equation}
Since $r<(\diam X)/2,$ there is a point $z\in X\setminus B(x,2r)$ and hence $\partial B(x,3r/2) $ is nonempty as $X$ is connected. Therefore we can find a point $w\in B(x,2r)$ such that $d(x,w)=3r/2$ and $B(w,r/2)\subset B(x,2r)\setminus B(x,r).$ Owing to \eqref{Qdoubling}, we ensure that there exists a constant $C=C(C_d)$ such that
\begin{equation*}
\mu(B(x,r))\leq\mu(B(x,2r))\leq C\mu(B(w,\frac{r}{2}))\leq C\mu(B(x,2r)\setminus B(x,r))
\end{equation*}
holds for all $x\in X$ and for all $0<r<(\diam X)/2.$ This implies that there exists a constant $C'(C_d)$ such that $\min\{\mu(B(x,\frac{r}{2})),\mu(X\setminus B(x,r))\}\geq C'\mu(B(x,r)).$ Therefore the conclusion follows from \eqref{maininequality}.\\
(ii) For $x\in X,$ define the function $u$ as in \eqref{mainfunction} for $x,r,\frac{r}{4}$ instead of $x,r,\frac{r}{2}.$ Again use the estimate $\Vert u\Vert_{M^{1,p}}\lesssim \frac{1}{r}\mu(B(x,r))^{\frac{1}{p}},$ $r\leq 1,$ in the hypothesis to obtain
\begin{equation*}
\vert u(y)-u(z)\vert\lesssim \frac{\mu(B(x,r))^{1/p}}{r} d(y,z)^{1-Q/p},
\end{equation*}
which is true for all $y,z\in X.$ In particular, let $y\in B(x,\frac{r}{4})$ and $z\in B(x,\frac{3r}{2})\setminus B(x,r).$ Such points exist because of the connectivity of $X$ and the choice of $r.$ Therefore $u(y)=1,$ $u(z)=0$ and $d(y,z)\leq 2r.$ Hence $\mu(B(x,r))\gtrsim r^Q.$  
\end{proof}
If we assume, in addition, that the space $X$ is geodesic, then one gets slightly better conclusion, see Theorem \ref{withoutproof}, using a similar proof as of Theorem 5.1 of \cite{K}; one needs to use the estimate of $\Vert u\Vert_{M^{1,p}}$ as in the above theorem and the rest will be same as the proof of Theorem 5.1 of \cite{K}. We will omit the proof here.
\begin{theorem}\label{withoutproof}
Suppose that $(X,d,\mu)$ is a geodesic space and let $\Omega$ be a domain in $X.$\\
(i) If there exist constants $C_1, C_2$ such that for all $u\in M^{1,Q}(\Omega),$ $Q>1,$ and for all balls $B_R\in X,$ we have
\begin{equation*}
\int_{B_R\cap\Omega}\exp\left(C_1\frac{\vert u-u_B\vert}{\Vert u\Vert_{M^{1,Q}}(\Omega)}\right)\,d\mu\leq C_2R^Q,
\end{equation*}
then there exists $b=b(C_1,C_2,C_d)$ such that $\mu(B(x,r)\cap\Omega)\geq br^Q$ holds for all $x\in X$ and for all $0<r\leq 1.$\\
(ii) If there exists a constant $C_3$ such that for all $u\in M^{1,p}(\Omega),$ and for every $y,z\in \Omega,$ we have $\vert u(y)-u(z)\vert\leq C_3\Vert u\Vert_{M^{1,p}(\Omega)}d(y,z)^{1-Q/p},$ for some $1<Q<p,$ then there exists $b=b(C_3,C_d)$ such that $\mu(B(x,r)\cap\Omega)\geq br^Q$ holds for all $x\in X$ and for all $0<r\leq 1.$
\end{theorem}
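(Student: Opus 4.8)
The plan is to reuse, at each dyadic scale, the cut-off test functions of Theorems~\ref{fordomain} and~\ref{connected}, driving them with the geodesic halving construction from the proof of Theorem~\ref{fordomain}; the only genuinely new input relative to Theorem~5.1 of \cite{K} is the Haj\l asz--Sobolev norm of the cut-off, which is exactly the estimate produced in the proof of Theorem~\ref{connected}. Fix $x\in X$ and $0<r\le 1$. For a ball $B(x,\rho)$ let $u_\rho$ be the function equal to $1$ on $B(x,\rho/2)\cap\Omega$, equal to $0$ on $\Omega\setminus B(x,\rho)$, and interpolating linearly in $d(x,\cdot)$ in between; then $g=\tfrac{2}{\rho}\chi_{B(x,\rho)\cap\Omega}$ is a $1$-gradient, and for $\rho\le 1$ one has
\[
\Vert u_\rho\Vert_{M^{1,p}(\Omega)}\lesssim \frac{1}{\rho}\,\mu(B(x,\rho)\cap\Omega)^{1/p},
\]
with $p=Q$ in part (i). This single estimate is what the remark instructs us to import.

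For part (ii), the simpler case, I would argue directly at the scale $\rho=r$. Pick any $y\in B(x,r/2)\cap\Omega$ (nonempty, as it has positive measure), so $u_r(y)=1$, and a point $z\in\Omega$ with $r\le d(x,z)\le \tfrac32 r$, so $u_r(z)=0$ and $d(y,z)\le 2r$. Such a $z$ is produced, as in Theorem~\ref{connected}, from the connectivity of $\Omega$ together with the geodesity of $X$: a curve in $\Omega$ leaving $B(x,r/2)$ must meet every intermediate sphere $\partial B(x,\rho)$. The Haj\l asz-Sobolev-H\"older hypothesis then gives
\[
1=\vert u_r(y)-u_r(z)\vert\le C_3\Vert u_r\Vert_{M^{1,p}(\Omega)}\,(2r)^{1-Q/p}\lesssim \frac{\mu(B(x,r)\cap\Omega)^{1/p}}{r}\,r^{1-Q/p},
\]
and since $p>Q$ this rearranges at once to $\mu(B(x,r)\cap\Omega)\gtrsim r^{Q}$.

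For part (i) the exponential integrability forces me to telescope, as in Theorem~\ref{fordomain}, since I cannot assume any lower bound on $\mu(B(x,r/2)\cap\Omega)$ in terms of $\mu(B(x,r)\cap\Omega)$. Writing $m=\mu(B(x,r)\cap\Omega)$ and using that $\mu(\partial B(x,\rho)\cap\Omega)=0$, I choose $r=r_0>r_1>\cdots\to 0$ with $\mu(B(x,r_j)\cap\Omega)=2^{-j}m$, and at scale $j$ I test the hypothesis on the ball $B(x,r_j)$ with the cut-off $u_j$ equal to $1$ on $B(x,r_{j+1})\cap\Omega$ and $0$ outside $B(x,r_j)$. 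Keeping the integrand on a subset of measure $\gtrsim 2^{-j}m$ on which $\vert u_j-(u_j)_B\vert$ exceeds an absolute constant yields
\[
2^{-j}m\,\exp\!\Big(\frac{c\,(r_j-r_{j+1})}{(2^{-j}m)^{1/Q}}\Big)\le C_2\,r_j^{Q},
\]
so that, taking logarithms and using $r_j\le r$,
\[
r_j-r_{j+1}\lesssim (2^{-j}m)^{1/Q}\Big(\log\tfrac{r^{Q}}{m}+j+1\Big)
\]
(we may assume $m<r^{Q}$, else the claim is immediate). Summing the convergent series $\sum_j 2^{-j/Q}(j+1)$ and using $r=\sum_j (r_j-r_{j+1})$ gives $r\lesssim m^{1/Q}\big(\log(r^{Q}/m)+1\big)$; setting $t=m^{1/Q}/r$ this reads $1\lesssim t\,(1-Q\log t)$, and since $t(1-Q\log t)\to 0$ as $t\to 0^{+}$ the ratio $t$ is bounded below, i.e. $\mu(B(x,r)\cap\Omega)\gtrsim r^{Q}$.

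The main obstacle, common to both parts, is the geometric step used above: for part (ii) the existence inside $\Omega$ of a point $z$ at controlled distance on which the cut-off vanishes, and for part (i) the existence inside $B(x,r_j)\cap\Omega$ of a set of measure $\gtrsim 2^{-j}m$ on which $u_j$ stays a definite distance from its mean $(u_j)_B$. When $(u_j)_B\le 3/4$ the inner half $B(x,r_{j+1})\cap\Omega$, where $u_j=1$, already serves; the delicate case is $(u_j)_B>3/4$, where one must exhibit a comparable outer collar of $\Omega$ on which $u_j$ is small. In Theorem~\ref{connected} such a collar was produced by connectivity alone, and only for $r<(\diam X)/2$, by fitting a ball $B(w,r/2)$ into the annulus via \eqref{Qdoubling}; the role of geodesity here is to manufacture this collar inside the domain for the whole range $0<r\le 1$, which is precisely the source of the slightly better conclusion. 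Once this lemma is in hand, the norm estimate, the convergence of the telescoping sums, and the one-variable inequality in $t$ are all routine. I would finally stress that the power $Q$ is critical: applying Theorem~\ref{fordomain} with $p=Q$ at any finite $p^*$ yields only $r^{Q'}$ with $Q'>Q$, so the exponential, respectively H\"older, hypothesis is indispensable for the sharp exponent.
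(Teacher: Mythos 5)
The paper itself omits this proof, deferring to Theorem 5.1 of \cite{K}, so your reconstruction can only be judged on its own terms. Part (ii) is essentially complete: the cut-off, the norm bound $\Vert u_r\Vert_{M^{1,p}(\Omega)}\lesssim \mu(B(x,r)\cap\Omega)^{1/p}/r$, and the evaluation at $y\in B(x,r/2)\cap\Omega$ and at $z\in\Omega$ with $d(x,z)\geq r$ give the claim in one line. The only loose end there is the degenerate case $\Omega\subset B(x,r)$, in which no such $z$ exists and one must argue separately at a scale comparable to $\diam\Omega$.

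Part (i), however, has a genuine gap exactly where you locate the ``main obstacle,'' and the route you sketch for closing it would fail. The scale-$j$ inequality $2^{-j}m\exp\bigl(c(r_j-r_{j+1})(2^{-j}m)^{-1/Q}\bigr)\leq C_2r_j^Q$ requires, for each $j$, a subset of $B(x,r_j)\cap\Omega$ of measure $\gtrsim 2^{-j}m$ on which $\vert u_j-(u_j)_B\vert$ is bounded below; in the case $(u_j)_B>3/4$ you propose to produce an ``outer collar of $\Omega$'' as in Theorem \ref{connected}, by fitting a ball $B(w,\cdot)$ into the annulus. For a domain this is circular: what is needed is a lower bound on $\mu(B(w,\cdot)\cap\Omega)$, which is precisely the measure density condition being proved, and geodesity of $X$ gives no control on how $\Omega$ meets that annulus (the domain may pinch there or miss it entirely). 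Geodesity enters only as it already did in Theorem \ref{fordomain}: it gives $\mu(\partial B(x,\rho))=0$ and hence the halving radii of \eqref{applicationofgeodesic}. The repair, within your own framework, is to decouple the support of the test function from the integration ball: take $u_j=1$ on $B(x,r_{j+2})\cap\Omega$, $u_j=0$ off $B(x,r_{j+1})$, and integrate over $B(x,r_j)$; then $\{u_j=1\}$ and $\{u_j=0\}\cap B(x,r_j)\cap\Omega$ have measures $2^{-j-2}m$ and $2^{-j-1}m$ by construction, and since $1\leq\vert 1-(u_j)_B\vert+\vert(u_j)_B\vert$ one of these two sets carries $\vert u_j-(u_j)_B\vert\geq 1/2$. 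Note that this controls $\sum_{j\geq 1}(r_j-r_{j+1})=r_1$ but still not the outermost increment $r_0-r_1$ (any test function that vanishes on a set of prescribed measure inside $B(x,r_0)$ has its gradient living at a strictly smaller scale), so an additional argument is owed there --- this is exactly the step for which the paper points to \cite{K}. Your norm estimate, the summation of the telescoping series, and the final one-variable inequality $1\lesssim t(1-Q\log t)$ are all correct.
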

Now we will consider the spaces $M^s_{p,q}(X), N^s_{p,q}(X)$ and prove a similar result as in \cite{Gor17}. We will need the following lemma, which is proved using the idea of the proof of Lemma 3.10 of \cite{HIT16}.
\begin{lemma}\label{gradient}
Let $0<s<1,$ $0<p<\infty$ and $0<q\leq\infty$ and let $F\subset X$ be a measurable set, where $X=(X,d,\mu)$ is a metric space. Let $u$ be a bounded $L$-Lipschitz function supported in $F.$ Then the sequence $\vec{g}=(g_k)_{k\in\mathbb{Z}}$ defined by
\begin{equation*}
g_k=
 \begin{cases}
  2^{k(s-1)}L\chi_{\supp u} & \text{if $k\geq k_L$},\\
  2^{sk+2}\Vert u\Vert_{\infty}\chi_{\supp u}& \text{if $k<k_L$},
 \end{cases}
\end{equation*}
is a fractional $s$-gradient of $u,$ where $k_L$ is an integer such that $2^{k_L-1}<L\leq 2^{k_L}.$ Moreover, $\Vert \vec{g}\Vert_{L^p(X,l^q)}\leq C_1(1+\Vert u\Vert_{\infty})L^s\,\mu(F)^{\frac{1}{p}}$ and $\Vert \vec{g}\Vert_{l^q(L^p)(X)}\leq C_2(1+\Vert u\Vert_{\infty})L^s\,\mu(F)^{\frac{1}{p}}$ for some constants $C_1, C_2$ depending on $s$ and $q.$
\end{lemma}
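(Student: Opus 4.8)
The plan is to prove the two assertions in turn: first that $\vec g=(g_k)_{k\in\mathbb Z}$ satisfies the defining inequality of a fractional $s$-gradient, and then that its two mixed norms obey the stated bounds. The second part is where the hypothesis $0<s<1$ is genuinely used, while the first is the more delicate.

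For the gradient property I would fix $k\in\mathbb Z$ together with $x,y\in X$ obeying $2^{k-1}\le d(x,y)<2^k$, and check $|u(x)-u(y)|\le d(x,y)^s\big(g_k(x)+g_k(y)\big)$ directly. Since each $g_k$ is carried by $\supp u$, the inequality is automatic unless at least one of $x,y$ lies in $\supp u$: if neither does, the left-hand side vanishes because $u(x)=u(y)=0$. For the remaining configurations the only inputs are the two elementary estimates available for a bounded Lipschitz function, namely $|u(x)-u(y)|\le L\,d(x,y)$ and $|u(x)-u(y)|\le 2\Vert u\Vert_\infty$, and I would split the verification at the threshold $k_L$ fixed by $2^{k_L-1}<L\le 2^{k_L}$. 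This threshold is chosen precisely so that on one side of it the Lipschitz estimate, and on the other the sup-norm estimate, each combined with the sandwich $2^{k-1}\le d(x,y)<2^k$, reproduces the prescribed value of $g_k$ up to the numerical constants built into the formula. The cases that demand the most care are the borderline index $k=k_L$ and the configuration in which exactly one of the points lies in $\supp u$, so that only the single term $g_k(x)$ survives on the right; I expect this case analysis to be the main obstacle.

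The norm estimates are then routine. Each $g_k$ has the product shape $g_k=c_k\,\chi_{\supp u}$ with $c_k=2^{k(s-1)}L$ for $k\ge k_L$ and $c_k=2^{sk+2}\Vert u\Vert_\infty$ for $k<k_L$, so both $\Vert\vec g\Vert_{L^p(X,l^q)}$ and $\Vert\vec g\Vert_{l^q(L^p(X))}$ collapse to the same quantity $\mu(\supp u)^{1/p}\big(\sum_{k\in\mathbb Z}c_k^q\big)^{1/q}$, with the supremum replacing the sum when $q=\infty$. I would split $\sum_k c_k^q$ at $k_L$ into two one-sided geometric series: the tail over $k\ge k_L$ has ratio $2^{q(s-1)}<1$ because $s<1$, while the tail over $k<k_L$ is geometric in $2^{sq}$ summed towards $-\infty$, convergent because $s>0$. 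Using $2^{k_L}\approx L$, each tail is comparable to $L^{sq}$, respectively $\Vert u\Vert_\infty^q L^{sq}$, whence $\big(\sum_k c_k^q\big)^{1/q}\lesssim(1+\Vert u\Vert_\infty)L^s$ with a constant depending only on $s$ and $q$; since $\supp u\subseteq F$ gives $\mu(\supp u)\le\mu(F)$, both claimed inequalities follow.
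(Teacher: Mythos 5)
Your outline is the same as the paper's: verify the pointwise inequality directly from the two elementary bounds $\vert u(x)-u(y)\vert\le L\,d(x,y)$ and $\vert u(x)-u(y)\vert\le 2\Vert u\Vert_\infty$, then sum two one-sided geometric series for the norms. Your norm computation is complete and correct, including the useful observation that both mixed norms collapse to $\mu(\supp u)^{1/p}\bigl(\sum_k c_k^q\bigr)^{1/q}$ and that $2^{k_L}\approx L$ turns each tail into $L^s$, resp.\ $\Vert u\Vert_\infty L^s$.

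The step you leave as the ``main obstacle'' is, however, the one that fails in the form you set it up. With the sandwich $2^{k-1}\le d(x,y)<2^k$ quoted from Definition 2.1, the Lipschitz bound gives $\vert u(x)-u(y)\vert\le d(x,y)^s\,L\,d(x,y)^{1-s}<d(x,y)^s\,L\,2^{k(1-s)}$ --- exponent $k(1-s)$, not $k(s-1)$ --- which for $k\ge k_L$ exceeds the prescribed $g_k$ by the factor $2^{2k(1-s)}$, and no numerical constant absorbs that (the sup-norm bound likewise fails there when $s<1/2$). The lemma is only true under the normalization $2^{-k-1}\le d(x,y)<2^{-k}$ of Koskela--Yang--Zhou, which is what the paper's proof silently switches to; you should flag this rather than inherit the definition's typo. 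Under that convention the verification is immediate and needs none of the case analysis you anticipate: each of the two constant-in-$k$ sequences $\rho_k=2^{k(s-1)}L\chi_{\supp u}$ and $h_k=2^{sk+2}\Vert u\Vert_\infty\chi_{\supp u}$ is by itself a fractional $s$-gradient for \emph{every} $k\in\mathbb{Z}$, since $d(x,y)^{1-s}<2^{k(s-1)}$ and $2\le d(x,y)^s 2^{(k+1)s+1}\le d(x,y)^s 2^{sk+2}$; moreover each estimate already uses only one of the two terms $g_k(x)+g_k(y)$, so the configuration with exactly one point in $\supp u$ and the borderline index $k=k_L$ require nothing extra. The threshold $k_L$ plays a role only in making the norm sums finite and of the right size, exactly as in your last paragraph.
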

\begin{proof}
Let $x,y\in X$ and let $k\in\mathbb{Z}$ such that $2^{-k-1}\leq d(x,y)<2^{-k}.$ If $x,y\in \supp u,$ then
\begin{eqnarray*}
\vert u(x)-u(y)\leq Ld(x,y)<d(x,y)^s\,L\,2^{k(s-1)}\leq d(x,y)^s(\rho_k(x)+\rho_k(y)),
\end{eqnarray*}
where $\rho_k=2^{k(s-1)}L\chi_{\supp u}$ for all $k\in\mathbb{Z}.$ The other cases when one of $x$ and $y$ is in $\supp u$ or none of them is in $\supp u$ can considered similarly. Again, for $x,y\in\supp u,$ we have
\begin{eqnarray*}
\vert u(x)-u(y)\vert\leq 2\Vert u\Vert_{\infty}\leq d(x,y)^s2^{sk+2}\Vert u\Vert_{\infty}\leq d(x,y)^s(h_k(x)+h_k(y)),
\end{eqnarray*} 
where $h_k=2^{sk+2}\Vert u\Vert_{\infty}\chi_{\supp u}$ for all $k\in\mathbb{Z}.$ Therefore, $(g_k)_{k\in\mathbb{Z}}$ is also a fractional $s$-gradient of $u.$ Moreover, we have
\begin{eqnarray*}
\left(\sum_{k\in\mathbb{Z}}\vert g_k\vert ^q\right)^{1/q}\leq C_1'\left[\Vert u\Vert_{\infty}\left(\sum_{k<k_L}2^{(sk+2)q}\right)^{1/q}+L\left(\sum_{k\geq k_L}2^{kq(s-1)}\right)^{1/q}\right]\chi_{\supp u}
\end{eqnarray*}
and hence
\begin{eqnarray*}
\Vert \vec{g}\Vert_{L^p(X,l^q)} &\leq & C_1''(\Vert u\Vert_{\infty}2^{sk_L}+L2^{k_L(s-1)})\mu(F)^{1/p}\\
&\leq & C_1L^s(1+\Vert u\Vert_{\infty})\mu(F)^{1/p}.
\end{eqnarray*}
Similarly, 
\begin{eqnarray*}
\Vert \vec{g}\Vert_{l^q(L^p)(X)}\leq\left(\sum_{k\in\mathbb{Z}}\Vert g_k\Vert_{L^p(X)}^q\right)^{\frac{1}{q}} &\leq & C_2'\mu(F)^{\frac{1}{p}}\left(\Vert u\Vert_{\infty}\left(\sum_{k<k_L}2^{(sk+2)q}\right)^{\frac{1}{q}}+\left(\sum_{k<k_L}2^{k(s-1)q}\right)^{\frac{1}{q}}\right)\\
&\leq & C_2''\mu(F)^{\frac{1}{p}}\left(\Vert u\Vert_{\infty}2^{sk_L}+L2^{k_L(s-1)}\right)\\
&\leq & C_2L^s(1+\Vert u\Vert_{\infty})\mu(F)^{1/p}.
\end{eqnarray*}
\end{proof}
\begin{theorem}
Let $0<s<1,$ $0<p<\infty$ and $0<q\leq\infty.$ Suppose that $(X,d,\mu)$ is a metric measure space with the doubling measure $\mu.$ If 
\begin{equation*}
M^s_{p,q}(X)\hookrightarrow L^{p^*}(X),
\end{equation*}
where $p^*>p,$ then there exists $C=C(s,p,q,p^*,C_e)$ such that
\begin{equation*}
\mu(B(x,r))\geq Cr^Q,\hspace{2cm} \text{for}\quad r\in (0,1],
\end{equation*}
where $\frac{1}{p}-\frac{1}{p^*}=\frac{s}{Q}$ and $C_e$ is the constant of the embedding. The claim also holds with $M^s_{p,q}(X)$ replaced by $N^s_{p,q}(X).$
\end{theorem}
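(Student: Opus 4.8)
The plan is to reuse the cut-off construction from the proofs of Theorem~\ref{fordomain} and Theorem~\ref{connected}, with the elementary gradient bound replaced by Lemma~\ref{gradient}. Fix $x\in X$ and $0<r\le 1$, and let $u$ be the cut-off defined as in \eqref{mainfunction}: $u\equiv 1$ on $B(x,r/2)$, $u(y)=\frac{2}{r}(r-d(x,y))$ on $B(x,r)\setminus B(x,r/2)$, and $u\equiv 0$ outside $B(x,r)$. Then $u$ is bounded by $1$, is $(2/r)$-Lipschitz, and is supported in $F=B(x,r)$, so Lemma~\ref{gradient} applies with $L=2/r$ and $\|u\|_\infty=1$, producing a fractional $s$-gradient $\vec g$ of $u$.

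First I would estimate $\|u\|_{M^s_{p,q}(X)}$ from above. Lemma~\ref{gradient} gives
\[
\|u\|_{\dot M^s_{p,q}(X)}\le\|\vec g\|_{L^p(X,l^q)}\le C_1(1+\|u\|_\infty)L^s\mu(F)^{1/p}\lesssim r^{-s}\mu(B(x,r))^{1/p},
\]
while $u\le 1$ supported in $B(x,r)$ yields $\|u\|_{L^p(X)}\le\mu(B(x,r))^{1/p}$; since $r\le 1$ forces $r^{-s}\ge 1$, the $L^p$ term is absorbed and $\|u\|_{M^s_{p,q}(X)}\lesssim r^{-s}\mu(B(x,r))^{1/p}$. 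For the lower bound, $u\equiv 1$ on $B(x,r/2)$ gives $\|u\|_{L^{p^*}(X)}\ge\mu(B(x,r/2))^{1/p^*}$.

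Next I would combine these with the embedding $\|u\|_{L^{p^*}(X)}\le C_e\|u\|_{M^s_{p,q}(X)}$ to get
\[
\mu(B(x,r/2))^{1/p^*}\lesssim r^{-s}\mu(B(x,r))^{1/p}.
\]
Using the doubling inequality $\mu(B(x,r))\le C_d\,\mu(B(x,r/2))$ to replace $\mu(B(x,r/2))$ by a multiple of $\mu(B(x,r))$ on the left, and collecting the powers of $\mu(B(x,r))$ with the help of $\frac1p-\frac1{p^*}=\frac sQ$, the inequality becomes $\mu(B(x,r))^{-s/Q}\lesssim r^{-s}$; raising both sides to the power $-Q/s$ reverses the inequality and produces exactly $\mu(B(x,r))\gtrsim r^Q$, with the implied constant depending only on $s,p,q,p^*$, the doubling constant $C_d$, and $C_e$. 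The Besov case $N^s_{p,q}(X)$ is handled identically, using instead the bound $\|\vec g\|_{l^q(L^p)(X)}\lesssim r^{-s}\mu(B(x,r))^{1/p}$ from the same lemma together with $\|u\|_{\dot N^s_{p,q}(X)}\le\|\vec g\|_{l^q(L^p)(X)}$.

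I do not anticipate a serious obstacle: the entire argument is the cut-off computation, and its one genuinely technical ingredient --- constructing a fractional $s$-gradient of $u$ with the right $L^p(X,l^q)$ (respectively $l^q(L^p)$) norm --- has already been isolated in Lemma~\ref{gradient}. The only points needing care are bookkeeping ones: checking that $r\le 1$ lets the inhomogeneous $L^p$ part be absorbed into the homogeneous seminorm, and tracking the exponents when eliminating $\mu(B(x,r))$ so that the defining relation $\frac1p-\frac1{p^*}=\frac sQ$ yields the exponent $Q$ and not some other power of $r$.
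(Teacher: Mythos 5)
Your construction (the Lipschitz cut-off on $B(x,r)$, the fractional $s$-gradient from Lemma~\ref{gradient}, the lower bound $\|u\|_{L^{p^*}}\ge\mu(B(x,r/2))^{1/p^*}$, and the algebra via $\frac1p-\frac1{p^*}=\frac sQ$) is exactly the paper's starting point, and the computation is sound. But there is a genuine shortfall against the statement as written: the theorem asserts a constant $C=C(s,p,q,p^*,C_e)$, independent of the doubling constant, whereas your final step --- replacing $\mu(B(x,r/2))$ by $C_d^{-1}\mu(B(x,r))$ via doubling --- produces a constant that depends on $C_d$. The paper explicitly flags this: ``Using the doubling condition, we easily get $\mu(B(x,r))\geq Cr^Q$, where the constant $C$ depends on $\dots$ and $C_d$. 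We can get rid of the dependence on the doubling constant $C_d$ by the same method introduced in \cite{Gor17}.'' So you have proved the ``easy'' version but not the stated one.

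The missing idea is a self-improving iteration. Instead of killing the ratio $\mu(B(x,\delta))/\mu(B(x,\delta/2))$ with doubling, the paper first uses H\"older's inequality to reduce the embedding to the dimensionless quotient $\|\vec g\|_{L^p(X,l^q)}/\|u\|_{L^p(X)}$, disposes of the inhomogeneous term by assuming (WLOG) that $\mu(B(x,\delta))$ is small for all $\delta\le r$, and arrives at the recursion
\begin{equation*}
\mu(B(x,\delta))\geq \left(\tfrac{1}{2C_eC_g}\right)^{\frac{pQ}{Q+sp}}\left(\tfrac{\delta}{2}\right)^{\frac{spQ}{Q+sp}}\mu(B(x,\delta/2))^{\frac{Q}{Q+sp}},\qquad \delta\le r.
\end{equation*}
Iterating $n$ times and letting $n\to\infty$, the factor $\mu(B(x,r/2^n))^{(Q/(Q+sp))^n}$ tends to $1$ (doubling is used here only qualitatively, via $\mu(B(x,r/2^n))\ge C_d^{-n}\mu(B(x,r))$ and $n\theta^n\to0$), and the geometric sums of the exponents produce an explicit constant depending only on $s,p,C_e,C_g$. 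To repair your argument you would need to insert this iteration in place of the single application of doubling; everything before that point can stay as is, for both the $M^s_{p,q}$ and $N^s_{p,q}$ cases.
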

\begin{proof}
For each $u\in M^s_{p,q}(X),$ we have
\begin{equation}\label{first}
\Vert u\Vert_{L^{p^*}(X)}\leq C_e\left(\Vert u\Vert_{L^p(X)}+\Vert\vec{g}\Vert_{L^p(X, l^q)}\right),
\end{equation}
where $\vec{g}=(g_k)_{k\in\mathbb{Z}}$ is a fractional $s$-gradient of $u.$ For a fixed $x\in X$ and for $0<R,$ let us define a function $u$ as follows
\begin{equation*}
u(y)=
\begin{cases}
  1& \text{if $y\in B(x,\frac{R}{2})\cap\Omega$},\\
  \frac{2}{R}(R-d(x,y)) & \text{if $y\in B(x,R)\setminus B(x,\frac{R}{2})\cap\Omega$},\\
  0& \text{if $y\in \Omega\setminus B(x,R)$}.
 \end{cases}
\end{equation*}
Note that $u$ is a $\frac{2}{R}$-Lipschitz function and hence by Lemma \ref{gradient}, the sequence $\{g_k\}_{k\in\mathbb{Z}}$ defined by
\begin{equation*}
g_k=
 \begin{cases}
  2^{k(s-1)}\,\frac{2}{R}\chi_{B(x,R)} & \text{if $k\geq k_L$},\\
  2^{sk+2}\chi_{B(x,R)}& \text{if $k<k_L$},
 \end{cases}
\end{equation*}
is a fractional $s$-gradient of $u$ with $\Vert \vec{g}\Vert_{L^p(X,l^q)}\leq C_g(2/R)^s\,\mu(B(x,R))^{\frac{1}{p}}.$ Therefore, from \eqref{first} and H\"older inequality, we have
\begin{equation}\label{holderapplication2}
\frac{1}{\mu(B(x,R))^{\frac{s}{Q}}}-C_e\leq C_e\frac{\Vert \vec{g}\Vert_{L^p(X,l^q)}}{\Vert u\Vert_{L^p(X)}}.
\end{equation}
Let us fix $r\leq 1.$ If we have $\mu(B(x,r)\cap\Omega)\geq \left(\frac{1}{2C_e}\right)^Qr^Q,$ then we are done. Thus we may assume that $\mu(B(x,r)\cap\Omega)\leq \left(\frac{1}{2C_e}\right)^Qr^Q.$ Thus for any $\delta\leq r,$ we have $\mu(B(x,\delta)\cap\Omega)\leq \left(\frac{1}{2C_e}\right)^Q.$ It implies, together with \eqref{holderapplication2}, that 
\begin{equation*}
\frac{1}{(2C_e)^p}\mu(B(x,\delta))^{-\frac{sp}{Q}}\leq \frac{\Vert \vec{g}\Vert_{L^p(X,l^q)}^p}{\Vert u\Vert_{L^p(X)}^p}.
\end{equation*}
Consequently, the structure of $u$ and $\vec{g}$ yields
\begin{equation*}
\frac{1}{(2C_e)^p}\mu(B(x,\delta))^{-\frac{sp}{Q}}\leq \frac{C_g^p(2/\delta)^{sp}\mu(B(x,\delta))}{\mu(B(x,\delta/2))}.
\end{equation*}
Therefore, we obtain, for each $\delta\leq r,$
\begin{equation}\label{finalestimate2}
 \mu(B(x,\delta))\geq \left(\frac{1}{2C_eC_g}\right)^{\frac{pQ}{Q+sp}}\left(\frac{\delta}{2}\right)^{\frac{spQ}{Q+sp}}\left(\mu(B(x,\delta/2))\right)^{\frac{Q}{Q+sp}}.
 \end{equation}
 Using the doubling condition, we easily get $\mu(B(x,r))\geq Cr^Q,$ where the constant $C$ depends on $s, p, q, p^*, C_e$ and $C_d.$ We can get rid of the dependence on the doubling constant $C_d$ by the same method introduced in \cite{Gor17}. Indeed, after iteration we get for any integer $n,$
 \begin{equation}\label{iteration}
 \mu(B(x,r))\geq \left(\frac{r^s}{2C_eC_g}\right)^{p\sum_{j=1}^n(\frac{Q}{Q+sp})^j}\left(\frac{1}{2}\right)^{sp\sum_{j=1}^nj(\frac{Q}{Q+sp})^j}\left(\mu\left(B(x,r/2^n)\right)\right)^{(\frac{Q}{Q+sp})^n}.
 \end{equation}
 Since the measure $\mu$ is doubling, we have
 \begin{equation*}
 \left(\mu\left(B(x,r)\right)\right)^{(\frac{Q}{Q+sp})^n}\geq\left(\mu\left(B(x,r/2^n)\right)\right)^{(\frac{Q}{Q+sp})^n}\geq C_d^{-n(\frac{Q}{Q+sp})^n} \left(\mu\left(B(x,r)\right)\right)^{(\frac{Q}{Q+sp})^n}
 \end{equation*}
 and hence
 \begin{equation*}
 \lim_{n\rightarrow\infty}\left(\mu\left(B(x,r/2^n)\right)\right)^{(\frac{Q}{Q+sp})^n}=1.
 \end{equation*}
 In addition, we have that
 \begin{align*}
 \sum_{n=1}^{\infty}\left(\frac{Q}{Q+sp}\right)^n=\frac{Q}{sp}\quad\mbox{and}\quad\sum_{n=1}^{\infty}n\left(\frac{Q}{Q+sp}\right)^n=\frac{Q(Q+sp)}{s^2p^2}.
 \end{align*}
 As a consequence, \eqref{iteration} gives us the desired lower bound, by letting $n\rightarrow\infty,$
 \begin{equation*}
 \mu(B(x,r))\geq\frac{1}{(2C_eC_g)^{\frac{Q}{s}}2^{\frac{Q(Q+sp)}{sp}}}r^Q.
 \end{equation*}
\end{proof}  
\begin{remark}
If we assume that the space $(X,d,\mu)$ is a $Q$-doubling space, $Q>1,$ then the above theorem deals with the case $sp<Q$ for the function spaces $M^s_{p,q}(X)$ and $N^s_{p,q}(X).$ For the cases $sp=Q$ and $sp>Q,$ we can get the same result as Theorem \ref{connected} for $M^s_{p,q}(X)$ and $N^s_{p,q}(X)$ by mimicking the proof of the same and by using Corollary 3.12 of \cite{HIT16}. In that case we assume that the space is connected and $Q$-doubling; the same result in a geodesic space is due to \cite{K}. 
\end{remark}
Note that the result of \cite{Gor17} holds for $M^{1,p}(X),$ i.e., for $M^1_{p,\infty},$ $p\geq 1.$ The same result will follow for $N^1_{p,\infty}$ from the next proposition. However, we do not know whether such results are true when $q<\infty,$ of course, if the space does not support a $p$-Poincar\'e inequality as otherwise these spaces are trivial in this case, \cite{GKS10}.
\begin{proposition}
Let $0<s,p<\infty.$ Then $M^{s,p}(X)\subset N^s_{p,\infty}(X).$
\end{proposition}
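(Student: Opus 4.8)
The plan is to exploit that the only real difference between an $s$-gradient (an element of $\mathcal{D}^s(u)$) and a fractional $s$-gradient (an element of $\mathbb{D}^s(u)$) is that the former is a single function required to satisfy the Haj\l asz inequality for \emph{all} admissible pairs $x,y$, whereas the latter is a sequence $(g_k)_{k\in\mathbb{Z}}$ in which the $k$-th term need only control pairs lying in the dyadic annulus $2^{k-1}\le d(x,y)<2^k$. Hence a single $s$-gradient should immediately yield a fractional $s$-gradient by simple repetition, and because we are in the endpoint $q=\infty$ the $l^\infty(L^p)$ aggregation of a constant sequence costs nothing.

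Concretely, I would take $u\in M^{s,p}(X)$, so that $u\in L^p(X)$ and there exists $g\in\mathcal{D}^s(u)$ with $g\in L^p(X)$; fix such a $g$ and its exceptional null set $E$. Define the constant sequence $g_k:=g$ for every $k\in\mathbb{Z}$. For $x,y\in X\setminus E$ with $2^{k-1}\le d(x,y)<2^k$, the defining inequality of $g$ gives
\[
\vert u(x)-u(y)\vert\le d(x,y)^s\big(g(x)+g(y)\big)=d(x,y)^s\big(g_k(x)+g_k(y)\big),
\]
so $\vec{g}=(g_k)_{k\in\mathbb{Z}}\in\mathbb{D}^s(u)$. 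Since $q=\infty$, the relevant Besov quantity is
\[
\Vert\vec{g}\Vert_{l^\infty(L^p(X))}=\sup_{k\in\mathbb{Z}}\Vert g_k\Vert_{L^p(X)}=\Vert g\Vert_{L^p(X)},
\]
because every term equals $g$. Thus $\Vert u\Vert_{\dot{N}^s_{p,\infty}(X)}\le\Vert g\Vert_{L^p(X)}$, and taking the infimum over all $g\in\mathcal{D}^s(u)$ yields $\Vert u\Vert_{\dot{N}^s_{p,\infty}(X)}\le\Vert u\Vert_{\dot{M}^{s,p}(X)}$. Adding $\Vert u\Vert_{L^p(X)}$ to both sides gives $\Vert u\Vert_{N^s_{p,\infty}(X)}\le\Vert u\Vert_{M^{s,p}(X)}$, which proves not only the inclusion $M^{s,p}(X)\subset N^s_{p,\infty}(X)$ but in fact that it is a continuous embedding.

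There is essentially no analytic obstacle here; the entire content lies in correctly matching the two definitions, and the single point to check with care is that the Besov norm uses the $l^\infty(L^p)$ aggregation (a supremum over $k$ of $L^p$ norms), for which a constant sequence is optimal. It is worth noting that had one instead used the Triebel--Lizorkin aggregation $L^p(X,l^\infty)$ one would recover precisely the identity $\dot{M}^{s,p}(X)=\dot{M}^s_{p,\infty}(X)$ recalled earlier, and the same inclusion could alternatively be obtained from the elementary inequality $\sup_{k}\Vert g_k\Vert_{L^p(X)}\le\big\Vert\sup_{k} g_k\big\Vert_{L^p(X)}$ applied to any common fractional $s$-gradient.
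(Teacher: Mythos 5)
Your proposal is correct and follows exactly the paper's argument: take an $s$-gradient $g\in\mathcal{D}^s(u)$, set $g_k\equiv g$ to obtain a fractional $s$-gradient, and observe that $\Vert\vec{g}\Vert_{l^\infty(L^p(X))}=\Vert g\Vert_{L^p(X)}$. You simply spell out the verification (and the resulting norm inequality) in more detail than the paper does.
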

\begin{proof}
Let $u\in M^{s,p}(X)$ and $g\in \mathcal{D}^s(u)$ be a $s$-gradient of $u.$ Taking $g_k\equiv g,$ we know that $\vec{g}=\{g_k\}_{k\in\mathbb{Z}}\in \mathbb{D}^s(u)$ and $\Vert \vec{g}\Vert_{l^{\infty}(L^p(X))}=\Vert g\Vert_{L^p(X)},$ which implies that $u\in N^s_{p,\infty}(X).$
\end{proof}
\def\bibname{References}
\bibliography{measure_density}
\bibliographystyle{alpha}

\end{document}